\theoremstyle{plain} 
\newtheorem{lemma}{Lemma}
\newtheorem{theorem}{Theorem}
\newtheorem{corollary}{Corollary}
\theoremstyle{definition}
\theoremstyle{remark}
\newcommand{\beql}[1]{\begin{equation}\label{#1}}
\newcommand{\eeq}{\end{equation}}
\newcommand{\comment}[1]{}
\newcommand{\Abs}[1]{{\left|{#1}\right|}}
\newcommand{\Set}[1]{{\left\{{#1}\right\}}}
\newcommand{\RR}{{\mathbb R}}
\newcommand{\CC}{{\mathbb C}}
\newcommand{\ZZ}{{\mathbb Z}}
\newcommand{\inner}[2]{{\langle #1, #2 \rangle}}
\newcommand{\supp}{{\rm supp\,}}
\newcommand{\esssup}{{\rm esssup\,}}
\newcommand{\diam}{{\rm diam\,}}
\newcommand{\ft}[1]{\widehat{#1}}
\begin{document}
\title[]{Discrepancy of line segments for general lattice checkerboards}
\subjclass[2010]{11K38,52C20}
\keywords{checkerboard, coloring, discrepancy, lattice tiling}

\author[M. Kolountzakis]{{Mihail N. Kolountzakis}}
\address{M.K.: Department of Mathematics and Aplied Mathematics, University of Crete, Voutes Campus., GR-700 13, Heraklion, Greece}
\email{kolount@gmail.com}

\thanks{M.K.\ has been partially supported by the ``Aristeia II'' action (Project
FOURIERDIG) of the operational program Education and Lifelong Learning
and is co-funded by the European Social Fund and Greek national resources.}

\date{January 9, 2016}

\begin{abstract}
In a series of papers recently 
``checkerboard discrepancy'' has been introduced, where a black-and-white checkerboard background
induces a coloring on any curve, and thus a discrepancy, i.e., the difference of
the length of the curve colored white and the length colored black.  Mainly
straight lines and circles have been studied and the general situation is that,
no matter what the background coloring, there is always a curve in the family
studied whose discrepancy is at least of the order of the square root of the
length of the curve.

In this paper we generalize the shape of the background, keeping the lattice
structure. Our background now consists of lattice copies of any bounded
fundamental domain of the lattice, and not necessarily of squares, as was the
case in the previous papers.  As the decay properties of the Fourier Transform of the
indicator function of the square were strongly used before, we now have to use
a quite different proof, in which the tiling and spectral properties of the
fundamental domain play a role.
\end{abstract}

\maketitle
\tableofcontents

\section{Introduction}

\subsection{Checkerboard discrepancies}

In the series of papers \cites{K, IK, KP} the discrepancies of curves against a
two-colored checkerboard background were introduced and upper and lower bounds
for straight lines and circles were given.  Briefly, the discrepancy of a curve
$\Gamma$ against a two-colored background, say a black-and-white background, is the
absolute difference between the lengths of the black and the white part of $\Gamma$.
See Figure \ref{fig:board}.

The motivating question was the following.
\begin{quotation}
Can we color the usual lattice subdivision of the plane into a checkerboard
using two colors so that any line segment placed on the plane has {\em bounded}
discrepancy?
\end{quotation}
It was shown in \cites{K} that the answer is negative: for any two-coloring of
the infinite checkerboard, i.e., of the partition
$$
\RR^2 = \bigcup_{k \in \ZZ^2} \left( k+[0,1)^2 \right),
$$
there are arbitrarily large line segments $I$ whose discrepancy is at least $C
\sqrt{\Abs{I}}$, where $\Abs{I}$ is the length of the segment $I$ and $C$ is an
absolute constant.  Further questions, such as constructions of colorings that
give as small discrepancy as possible, as well as the discrepancy of other
shapes, mainly circles and circular arcs, have been dealt with in \cites{K, IK,
KP}.

\begin{figure}[h]
\begin{center} \resizebox{5cm}{!}{\input 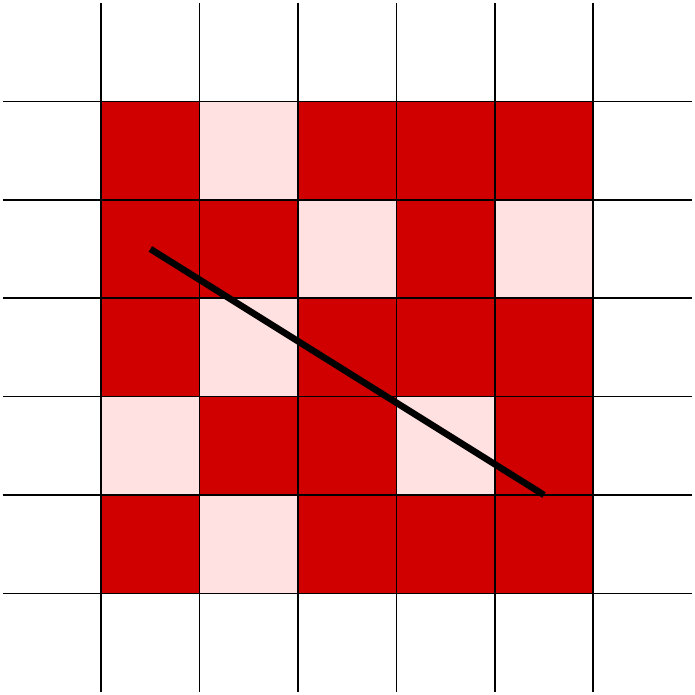_t} \end{center}
\caption{A two-colored checkerboard with a line segment on it.}
\label{fig:board}
\end{figure}

\subsection{Geometric discrepancy}
The question dealt with in this paper as well as in \cites{K, IK, KP}
falls naturally into the subject of
geometric discrepancy \cites{alexander,beck-chen,chazelle,drmota,matousek}.  In
this research area there is usually an underlying measure $\mu$ as well as a
family ${\mathcal F}$ of allowed subsets of Euclidean space, on which the
measure $\mu$ is evaluated and upper and lower bounds are sought on the range
of $\mu$ on ${\mathcal F}$.  The most classical case is that where $\mu$ is a
normalized collection of points masses in the unit square minus Lebesgue
measure and ${\mathcal F}$ consists of all axis-aligned rectangles in the unit
square.  Usually the underlying measure $\mu$ has an atomic part (point masses)
and the family ${\mathcal F}$ consists of ``fat'' sets.  In the problem we are
studying here the measure $\mu$ has no atomic part (it is absolutely
continuous) and the collection ${\mathcal F}$ consists of all straight line
segments, or other curves, which may be considered thin sets, and, strictly
speaking, $\mu$ is $0$ on these sets.  The measure $\mu$ is however regular
enough to induce a measure on the one-dimensional objects whose discrepancy we
study, and it is of this induced measure that we study the range.

We note that discrepancies with respect to non-atomic colorings have been
considered by Rogers in \cite{RO1}, \cite{RO2} and \cite{RO3} where the author
considers, among other things, the discrepancy of lines and half spaces with
respect to finite colorings of the plane. Rogers proves lower bounds for the
discrepancy of these families of sets with respect to generalized colorings.
His results do not seem to be comparable to the results in this paper or those in \cites{K, IK, KP}.

\subsection{New results}
In this paper we generalize the results of \cites{K} to more general two-colorings
of the plane. We keep the lattice structure of the original problem but now
we tile the plane with much more general shapes than squares.
For a lattice $T \subseteq \RR^2$
(discrete, additive subgroup of $\RR^2$ whose linear span is the plane)
we consider a fundamental domain $Q$ of $T$, i.e., a set containing exactly one
element from each coset of $T$ in $\RR^2$.
This is the same as asking that
$$
\RR^2 = \bigcup_{t \in T} (t+Q)\ \ \ \text{is a partition of $\RR^2$.}
$$
We then color each set $t+Q$ in this partition using using one of two possible colors and this defines the
discrepancy of every line segment on the plane on which the coloring function is measurable.
This happens for almost all (Lebesgue) straight lines normal to any given direction by Fubini's theorem.

See Figures \ref{fig:lboard} and \ref{fig:hboard} for two examples of such generalized
checkerboards.

\begin{figure}[h]
 \begin{center} \resizebox{5cm}{!}{\input 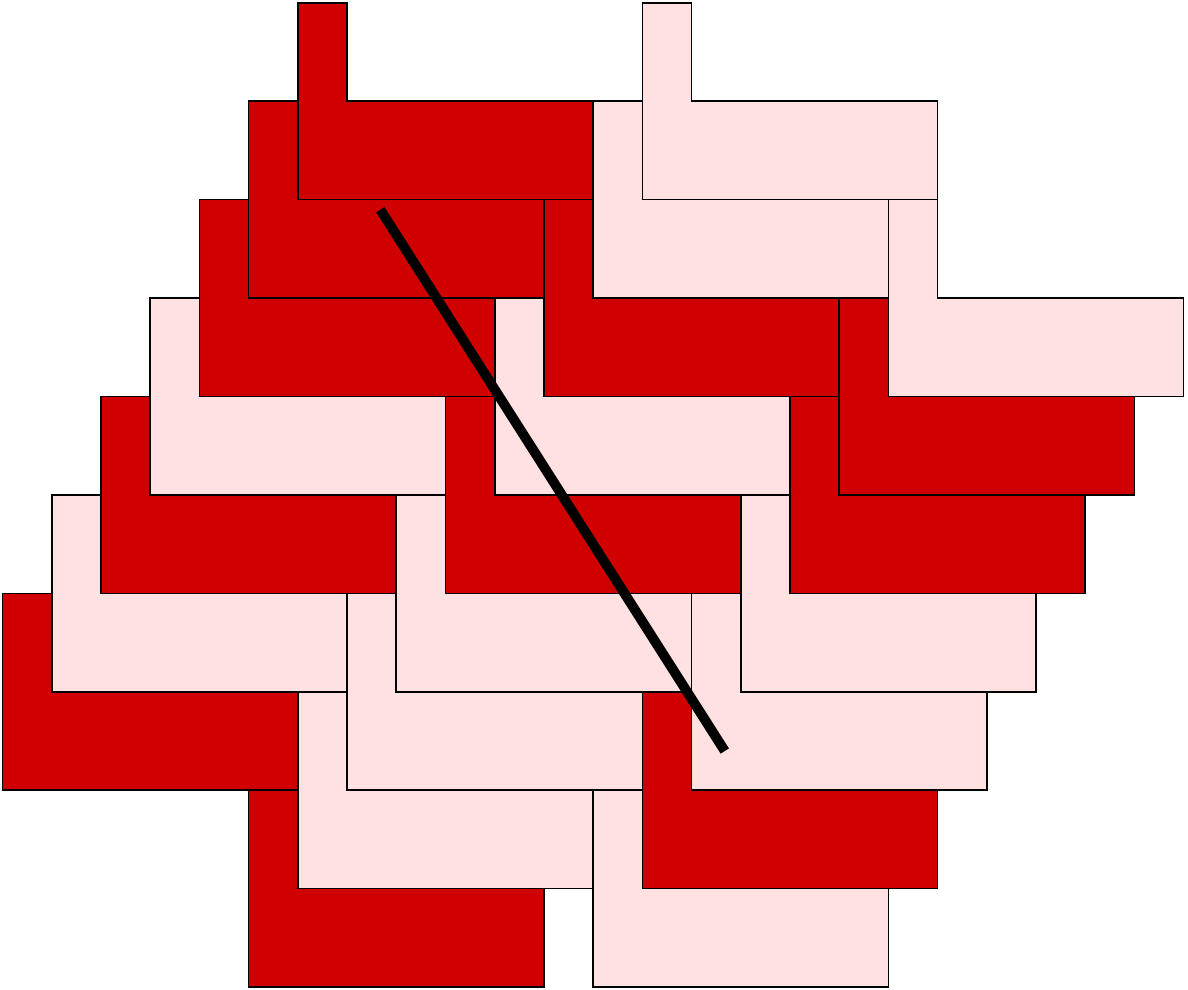_t} \end{center}
\caption{A two-colored $L$-shaped checkerboard.}
\label{fig:lboard}
\end{figure}

\begin{figure}[h]
 \begin{center} \resizebox{5cm}{!}{\input 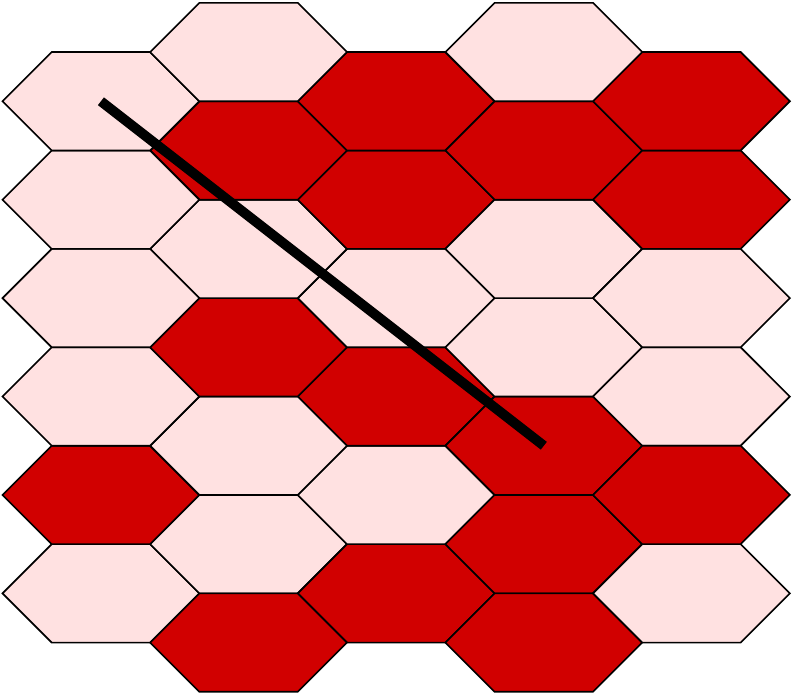_t} \end{center}
\caption{A two-colored hexagon-shaped checkerboard.}
\label{fig:hboard}
\end{figure}

The following Theorem \ref{th:line-discrepancy} and Corollary \ref{cor:full-plane} are our main results.
The method generalizes to the results in
\cites{IK, KP} as well, but we leave them out to keep the exposition simple.

\begin{theorem}\label{th:line-discrepancy}
Suppose $T \subseteq \RR^2$ is a lattice of area 1 and let $Q$
be a bounded, Lebesgue measurable fundamental domain of $T$.

Let also $G \subseteq T$ be
a finite subset of lattice points
and let the function $f:\RR^2 \to \CC$ (the {\em coloring}) be defined by
$$
f(x) = \sum_{g \in G} z_g \chi_Q(x-g),
$$
for some complex numbers $z_g$, $g \in G$.

Then there is a straight line $S$ such that
$f$ is measurable on $S$ and
$$
\Abs{\int_S f} \ge C (\diam{G})^{-1/2} \left(\sum_{g \in G} \Abs{z_g}^2\right)^{1/2}, 
$$
where $C$ is a positive constant that may depend on $T$ and $Q$ only.
\end{theorem}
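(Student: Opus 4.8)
\subsection*{Proof proposal}

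The plan is to pass to the Fourier side and work with the X-ray (Radon) transform of $f$. For a unit vector $u\in\RR^2$ and $s\in\RR$ write $R_uf(s)=\int_{\Set{x:\inner{x}{u}=s}}f\,d\mathcal H^1$ for the integral of $f$ over the line normal to $u$ at height $s$; by Fubini this is defined for a.e.\ $s$ for a.e.\ $u$, and these are exactly the quantities $\Abs{\int_S f}$ we want to make large. By the Fourier slice theorem $\widehat{R_uf}(t)=\ft f(tu)$, and since $\ft f(\xi)=\ft{\chi_Q}(\xi)\,P(\xi)$ with $P(\xi)=\sum_{g\in G}z_g e^{-2\pi i\inner{g}{\xi}}$, we get $\widehat{R_uf}(t)=\ft{\chi_Q}(tu)P(tu)$. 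Two structural facts about $\chi_Q$ will carry the argument, both coming from $Q$ being a measurable fundamental domain of the unit-covolume lattice $T$ (equivalently, from the exponentials $\Set{e^{2\pi i\inner{\gamma}{\cdot}}}_{\gamma\in T^{*}}$, $T^{*}$ the dual lattice, forming an orthonormal basis of $L^2(Q)$): the \emph{tiling identity} $\ft{\chi_Q}(0)=1$ and $\ft{\chi_Q}(\gamma)=0$ for $\gamma\in T^{*}\setminus\Set{0}$, and the \emph{spectral (tight-frame) identity} $\sum_{\gamma\in T^{*}}\Abs{\ft{\chi_Q}(\xi+\gamma)}^2=1$ for a.e.\ $\xi$. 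We may also assume $0\in G$ after a translation of $f$, which affects neither $\diam G$, nor $\sum_g\Abs{z_g}^2$, nor the supremum we estimate; then $\supp f\subseteq G+Q$ has diameter $D\le\diam G+\diam Q$.

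First I would make the reduction. Since $R_uf$ is supported in an interval $I_u$ with $\Abs{I_u}\le D$, Cauchy--Schwarz gives $\Norm{R_uf}_\infty\ge\Abs{I_u}^{-1/2}\Norm{R_uf}_{L^2}\ge D^{-1/2}\bigl(\int_\RR\Abs{\ft f(tu)}^2\,dt\bigr)^{1/2}$. Hence it suffices to exhibit \emph{one} direction $u$ with
\[
\int_\RR\Abs{\ft f(tu)}^2\,dt\ \ge\ c\sum_{g\in G}\Abs{z_g}^2,\qquad c=c(T,Q)>0,
\]
for then $\sup_S\Abs{\int_S f}\ge \sqrt c\,D^{-1/2}(\sum_g\Abs{z_g}^2)^{1/2}$, which is the claim. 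The natural candidates are the directions $u_\gamma=\gamma/\Abs{\gamma}$ for $\gamma\in T^{*}$, because along such a line the tiling identity makes $t\mapsto P(tu_\gamma)$ periodic (its frequencies $\inner{g}{\gamma}/\Abs{\gamma}$ lie in $\Abs{\gamma}^{-1}\ZZ$) while $\widehat{R_{u_\gamma}f}$ vanishes at the nonzero points of $\Abs{\gamma}\ZZ$; unfolding the periodization then rewrites $\int_\RR\Abs{\ft f(tu_\gamma)}^2\,dt$ as an integral of $\Abs{P}^2$ against a partial sum of the series in the spectral identity. The idea is to run this for a finite family of $\gamma$'s (short vectors of $T^{*}$, the precise cutoff depending only on $T$ and $Q$), average, and use that this family is large enough that the relevant partial sums collectively recover the full spectral identity; since $\sum_{\gamma\in T^{*}}\Abs{\ft f(\xi_0+\gamma)}^2=\Abs{P(\xi_0)}^2$ pointwise (again by the fundamental-domain structure, via Plancherel on $\RR^2/T$), the total of the averaged integrals is comparable to $\sum_g\Abs{z_g}^2$, and one of the finitely many directions inherits a definite fraction of it.

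The hard part — and the reason the square argument does not transfer — is that $\ft{\chi_Q}$ now has \emph{no usable decay}: for a bad fundamental domain $\ft{\chi_Q}$ can be tiny on large chunks of a fiber $\xi_0+T^{*}$, so a priori all the fiber mass $\Abs{P(\xi_0)}^2$ could sit where $\ft{\chi_Q}$ is negligible, and no single slice would see it. This is where the two ingredients must be combined quantitatively: the spectral identity controls, fiber by fiber and on average, \emph{how much} of $\ft{\chi_Q}$'s weight can escape to the part we are discarding (it is $L^1$-small in $\xi_0$), while a Tur\'an/Remez--Nikolskii-type inequality for the trigonometric polynomial $P$ (whose frequencies lie in $G-G$) prevents $\Abs{P}^2$ from concentrating all of its $L^2$ mass on that small ``bad'' set. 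Balancing these — small measure of the bad set against the limited concentration of $P$ — is the crux of the proof and is exactly what forces the $(\diam G)^{-1/2}$ exponent. Finally, the regime where $\diam G$ is bounded (so $G$ ranges over finitely many configurations and the target inequality is scale-invariant in $z$) has to be disposed of separately by a compactness argument, which also fixes the admissible values of all the constants above in terms of $T$ and $Q$ only.
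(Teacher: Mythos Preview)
Your setup is right and matches the paper: the Fourier slice identity $\widehat{R_uf}(t)=\ft f(tu)=\ft{\chi_Q}(tu)P(tu)$, the $T^*$-periodicity of $P$, and the spectral identity $\sum_{\gamma\in T^*}\Abs{\ft{\chi_Q}(\xi+\gamma)}^2=1$ are exactly the structural ingredients the paper uses. Your Cauchy--Schwarz reduction, that it is enough to find a direction $u$ with $\int_\RR\Abs{\ft f(tu)}^2\,dt\ge c\sum_g\Abs{z_g}^2$ for some $c=c(T,Q)$, is also correct and equivalent to the paper's reduction.

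Where you go astray is the step you call ``the crux.'' You propose to pick finitely many rational directions $u_\gamma$ and then invoke a Tur\'an/Remez--Nikolskii inequality to prevent $\Abs{P}^2$ from concentrating on the set where $\ft{\chi_Q}$ is small. This is both unnecessary and internally inconsistent with your own reduction. Such polynomial inequalities carry constants that grow with the degree of $P$, i.e.\ with $\diam G$; if that dependence enters, your constant $c$ is no longer a function of $T,Q$ alone, and you do \emph{not} recover the stated bound --- you get something strictly weaker. Your remark that this balancing ``forces the $(\diam G)^{-1/2}$ exponent'' is therefore off: the $D^{-1/2}$ has already been spent in the Cauchy--Schwarz step, and the remaining constant must be absolute.

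The paper closes the argument without any concentration inequality for $P$, by averaging over \emph{all} directions rather than selecting rational ones. From the spectral identity and uniform continuity of $g(\xi)=\Abs{\ft{\chi_Q}(\xi)}^2$ one gets, by a compactness argument on a bounded fundamental domain $B$ of $T^*$, a radius $R'=R'(T,Q)$ such that $\sum_{\Abs{t^*}>R}g(\xi-t^*)<\tfrac12$ uniformly for $\xi\in B$. Unfolding via the $T^*$-periodicity of $P$ then gives
\[
\int_{\Abs{\xi}>R'}\Abs{\ft f(\xi)}^2\,d\xi\ \le\ \frac12\int_B\Abs{P}^2\ =\ \frac12\sum_{g\in G}\Abs{z_g}^2,
\]
so at least half of $\Norm{\ft f}_2^2=\sum_g\Abs{z_g}^2$ sits in the disk $\Abs{\xi}\le R'$. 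Writing that disk in polar coordinates and using the trivial bound $t\le R'$ yields
\[
\frac12\sum_g\Abs{z_g}^2\ \le\ R'\int_{S^1}\!\int_\RR\Abs{\ft f(tu)}^2\,dt\,du,
\]
so some direction $u$ has $\int_\RR\Abs{\ft f(tu)}^2\,dt\ge (4\pi R')^{-1}\sum_g\Abs{z_g}^2$. This is precisely your desired inequality with $c=(4\pi R')^{-1}$ depending only on $T,Q$. No Tur\'an/Remez step, no separate compactness argument for small $\diam G$, and no analysis of individual rational slices is needed; the compactness is used once, at the level of the weight $g$, not of $P$.
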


\noindent
{\bf Notation:} The letter $C$ will always denote a positive number (whose possible dependence on
parameters will be clearly stated) which is not the same in all its occurences.

The following speaks more directly about two-colorings of the plane.
\begin{corollary}\label{cor:full-plane}
Suppose $T$ and $Q$ satisfy the assumptions of Theorem
\ref{th:line-discrepancy}.
Then there is a positive constant $C$, which may depend
on $Q$ and $T$ only, such that for any assignment
$z_t = \pm 1$, for $t \in T$, and corresponding coloring function
$$
f(x) = \sum_{t \in T} z_g \chi_Q(x-t)
$$
there are arbitrarily large line segments $I$ such that
$f$ is measurable on the straight line supporting $I$ and
$$
\Abs{\int_I f} \ge C \sqrt{\Abs{I}}.
$$
\end{corollary}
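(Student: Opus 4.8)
The plan is to deduce the corollary from Theorem~\ref{th:line-discrepancy} by applying that theorem to the restriction of the full coloring $f$ to a large disk of tiles. Fix $L>0$ and a large radius $R$, to be chosen as a function of $L$ at the end. Let $G:=T\cap\overline{B}(0,R)$ and set $f_G(x):=\sum_{g\in G}z_g\chi_Q(x-g)$. Because the translates $t+Q$ ($t\in T$) partition the plane, $f_G$ coincides with $f$ on $U_G:=\bigcup_{g\in G}(g+Q)$ and vanishes off $U_G$; and since $Q$ is bounded there is a constant $r_Q$, depending on $Q$ only, with $\overline{B}(0,R-r_Q)\subseteq U_G\subseteq\overline{B}(0,R+r_Q)$. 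As $T$ has covolume $1$ we also have $\Abs{G}\ge c_0R^2$ and $\diam G\le 2R$ for all large $R$, where $c_0>0$ depends on $T$ only.

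Applying Theorem~\ref{th:line-discrepancy} to $f_G$, and using $\sum_{g}\Abs{z_g}^2=\Abs{G}\ge c_0R^2$ (the $z_g$ are $\pm1$), I get a line $S$, on which $f_G$ is measurable, with
\[
\Abs{\int_S f_G}\ \ge\ C(\diam G)^{-1/2}\Big(\sum_{g}\Abs{z_g}^2\Big)^{1/2}\ \ge\ C(2R)^{-1/2}(c_0R^2)^{1/2}\ =\ C_1\sqrt{R}.
\]
Now $\supp f_G\subseteq U_G\subseteq\overline{B}(0,R+r_Q)$, and the latter set is convex, so $S\cap\supp f_G$ is contained in the single chord $I:=S\cap\overline{B}(0,R+r_Q)$; hence $\int_S f_G=\int_I f_G$, so $\Abs{\int_I f_G}\ge C_1\sqrt R$, while $\Abs I\le 2(R+r_Q)\le 3R$. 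In particular the trivial estimate $C_1\sqrt R\le\Abs{\int_I f_G}\le\Abs I$ already shows that $I$ is a segment of length at least $C_1\sqrt R$.

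It remains to replace $f_G$ by $f$ on $I$. Since $f-f_G=\sum_{t\in T\setminus G}z_t\chi_Q(\cdot-t)$ is supported in $\{\Abs x>R-r_Q\}$, one has $\int_I f=\int_I f_G+\int_{I\cap A}f$, where $A:=\overline{B}(0,R+r_Q)\setminus B(0,R-r_Q)$ is an annulus of width $2r_Q$. When the chord $S$ crosses $\partial B(0,R)$ transversally, $\Abs{I\cap A}$ is bounded by a constant depending on $Q$ only, so $\Abs{\int_I f}\ge C_1\sqrt R-O_Q(1)\ge\tfrac12 C_1\sqrt R$ once $R$ is large; combined with $\Abs I\le 3R$ this gives $\Abs{\int_I f}\ge\tfrac12 C_1\sqrt R\ge\tfrac{C_1}{2\sqrt3}\sqrt{\Abs I}$, and since $\Abs I\ge C_1\sqrt R\to\infty$ this produces segments of arbitrarily large length with the desired discrepancy (choosing $R$ with $C_1\sqrt R\ge L$ gives one of length $\ge L$). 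If one insists that $f=f_G$ hold identically on the segment being integrated over, one can instead take $I$ to be the connected component of $S\cap U_G$ meeting $S\cap B(0,R-r_Q)$, since the remaining components of $S\cap U_G$ all lie inside $A$.

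The main obstacle is exactly the case I set aside: a chord of $\overline B(0,R)$ may be nearly tangent to $\partial B(0,R)$, and then $\Abs{I\cap A}$ can be of order $\sqrt{R\,r_Q}$, comparable to the main term $C_1\sqrt R$, so the crude bound $\Abs{\int_{I\cap A}f}\le\Abs{I\cap A}$ is useless. Dealing with this requires the tiling structure and not just length estimates: the tiles $t+Q$ with $t\in T\setminus G$ meeting a nearly tangent chord are anchored just outside $B(0,R)$ and occupy a stretch of the chord whose overlap with the stretch occupied by the $G$-tiles just inside $B(0,R)$ — the tiles that carry $\int_S f_G$ — is controlled; alternatively one argues that such lines may be avoided, e.g.\ by re-running the construction at a reduced scale or by translating the centre of the disk. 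The only other point to check is the routine fact that $f$ (not merely $f_G$) is measurable on $S$, which holds for almost every line in the direction produced by Theorem~\ref{th:line-discrepancy}, by Fubini's theorem.
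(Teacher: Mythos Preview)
Your overall strategy matches the paper's: restrict to the tiles in a large bounded window, apply Theorem~\ref{th:line-discrepancy} to the truncated coloring $F$, and then transfer the resulting line integral from $F$ to $f$ on a segment. The one substantive difference is the window shape. The paper uses the square $[0,R]^2$, with $G=\{t\in T:(t+Q)\cap[0,R]^2\neq\emptyset\}$, and then takes $I=S\cap[0,R]^2$; since every tile meeting $[0,R]^2$ lies in $G$, one has $f=F$ identically on $I$ (not merely on an inner region), so $\int_I f=\int_I F$ exactly. The paper then writes $S\cap\supp F=I\cup E$ and asserts that the one-dimensional measure of $E$ is bounded by a constant depending only on $Q$, which yields $\bigl|\int_I f\bigr|\ge\bigl|\int_S F\bigr|-O(1)\ge C\sqrt R\ge C\sqrt{|I|}$ in one line.

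Your disk window is what generates the unresolved difficulty. A chord nearly tangent to $\partial B(0,R)$ can lie inside your annulus $A$ for length of order $\sqrt{R\,r_Q}$, comparable to the main term $C_1\sqrt R$, and you correctly recognize that the crude bound $\bigl|\int_{I\cap A}f\bigr|\le|I\cap A|$ is then useless. But your final paragraph does not actually close this gap: the suggestions (exploiting the tiling structure of the annular tiles, translating the centre, rescaling) are directions rather than arguments, and the alternate choice of $I$ as a component of $S\cap U_G$ presupposes that this component carries most of $\int_S f_G$, which is exactly what fails for nearly tangent chords. As written, the proof is incomplete precisely at this boundary step. The paper's square window, together with its choice of $G$ as the tiles \emph{meeting} (rather than contained in) the window so that $f\equiv F$ on all of $I$, is what allows it to dispatch the boundary analysis via the asserted bound $|E|=O(1)$; that is the route to follow.
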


\begin{proof}[Proof of Corollary \ref{cor:full-plane}]
Let $R>0$ be large and let $G \subseteq T$ consist of all $t \in T$ such that
$$
(t + Q) \cap [0, R]^2 \neq \emptyset.
$$
Since $Q$ is bounded the set $G$ is finite.
By the boundedness of $Q$ we have that
\beql{bound1}
\diam{G} = \sqrt{2} R + O(1),
\eeq
as $R \to \infty$.
For the same reason we have that
\beql{bound2}
\Abs{G} = R^2 + O(R).
\eeq
The lower order terms in \eqref{bound1} and \eqref{bound2} are due to the boundary.

We may now apply Theorem \ref{th:line-discrepancy}
to the set $G$ and the coloring function
$$
F(x) = \sum_{g \in G} z_g \chi_Q(x-g).
$$
For sufficiently large $R$ we
obtain a straight line $S$ such that $F$ is measurable on $S$ and
\begin{align*}
\Abs{\int_S F} &\ge C (\diam{G})^{-1/2} \Abs{G}^{1/2} \\
 &\ge C (R+O(1))^{-1/2} (R^2+O(R))^{1/2} \\
 &\ge C \sqrt{R}.
\end{align*}
We may partition
$$
S \cap \supp{F} = I \cup E,
$$
where $I$ is the line segment $S \cap [0,R]^2$ and the one-dimensional measure of the set $E$ is at most a constant.
It follows that
\begin{align*}
\Abs{\int_I f} &= \Abs{\int_I F}\\
 &\ge \Abs{\int_S F} - O(1)\\
 &\ge C \sqrt{R}\\
 &\ge C \sqrt{\Abs{I}},
\end{align*}
as we had to show.
\end{proof}

\subsection{Does the checkerboard pattern matter?}
It is a natural question whether the checkerboard pattern of the coloring function really
matters for the discrepancy lower bounds.
Could it be the case that for {\em any} measurable coloring function
$$
f:[0,R]^2 \to \Set{-1, +1}
$$
there is always a straight line $S$, on which the restriction of $f$ is measurable and
$$
\Abs{\int_S f} \ge C \sqrt{R},
$$
as is the case when the function $f$ is a checkerboard coloring of $1\times 1$ squares?

That this is not so can be proved using a (randomized) construction of \cites{K}.
In \cites{K} it was proved that for every $N$ there is a $\pm 1$ coloring of the
usual subdivision in unit squares of $[0,N]^2$ such that for any straight line segment $I$
we have
$$
\Abs{\int_I f} \le C \sqrt{N \log N}.
$$
Take such a coloring of the square $[0, N]^2$. The discrepancy of any segment is at most $C\cdot \sqrt{N \log N}$.
Now scale the coloring down to the square
$$
\left[0,\sqrt{\frac{N}{\log N}}\right]^2,
$$
so that it becomes a coloring of that square
with a square checkerboard of side-length $\frac{1}{\sqrt{N \log N}}$. The discrepancy scales proportionally so any line
segment in this new coloring now has discrepancy at most $C$.

\section{Proof of the Main Theorem}

\begin{lemma}\label{lm:tail}
Suppose $L>0$ is a constant,
$\Lambda \subseteq \RR^d$ is a lattice and $g \in L^1(\RR^d)$ is a nonnegative function,
uniformly continuous on $\RR^d$,
such that
\beql{tiling}
\sum_{t \in \Lambda} g(x-t) = L,
\eeq
for all $x \in \RR^d$.
Then for each $\epsilon>0$ and for each $x \in \RR^d$ there is $r=r(x, \epsilon)>0$ and $R = R(x, \epsilon)>0$ such that
\beql{tail}
\sum_{t \in \Lambda, \Abs{t}>R} g(y-t) < \epsilon,\ \ \ \text{ whenever } \Abs{y-x}<r.
\eeq
\end{lemma}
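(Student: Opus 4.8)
The plan is to exploit the tiling identity \eqref{tiling} to transfer a statement about a tail of the series into a statement about a finite partial sum, and then to use uniform continuity to make the estimate hold uniformly on a small ball around $x$. First I would fix $x \in \RR^d$ and $\epsilon > 0$. Since $g \ge 0$ and $\sum_{t \in \Lambda} g(x-t) = L < \infty$, the series converges at the point $x$, so there is $R_0 > 0$ with
$$
\sum_{t \in \Lambda,\ \Abs{t} > R_0} g(x-t) < \frac{\epsilon}{2},
$$
equivalently, writing $F_{R_0}(y) = \sum_{t \in \Lambda,\ \Abs{t} \le R_0} g(y-t)$ for the finite partial sum, we have $F_{R_0}(x) > L - \epsilon/2$. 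The finite sum $F_{R_0}$ is a finite sum of translates of a uniformly continuous function, hence itself uniformly continuous; thus there is $r > 0$ such that $\Abs{F_{R_0}(y) - F_{R_0}(x)} < \epsilon/2$ whenever $\Abs{y-x} < r$, and therefore $F_{R_0}(y) > L - \epsilon$ for all such $y$.

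Now the key step: for any $y$, the full tiling identity \eqref{tiling} gives
$$
\sum_{t \in \Lambda,\ \Abs{t} > R_0} g(y-t) = L - F_{R_0}(y) < L - (L - \epsilon) = \epsilon
$$
whenever $\Abs{y - x} < r$. Here nonnegativity of $g$ is what makes the tail equal to the difference $L - F_{R_0}(y)$ rather than merely bounded by it, and it is also what guarantees the tail is a well-defined nonnegative quantity. Taking $R = R_0$ and keeping $r$ as above yields \eqref{tail}.

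There is essentially no serious obstacle here; the only point requiring a little care is the logical order, namely that one must choose the cutoff $R_0$ \emph{first} (from pointwise convergence at $x$), then shrink the ball radius $r$ afterwards so that the \emph{finite} sum $F_{R_0}$ — which is genuinely uniformly continuous — stays close to its value at $x$. One cannot reverse this, since the infinite tail is not obviously equicontinuous in any direct way; it is precisely the tiling identity that lets us bypass that difficulty by rewriting the tail as $L$ minus a finite, well-behaved sum. The uniform continuity hypothesis on $g$ is used only through this finite sum, and the global $L^1$ hypothesis is used only to know the series has finite sum (here already subsumed by \eqref{tiling}).
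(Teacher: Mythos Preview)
Your proof is correct and follows essentially the same approach as the paper's: first choose the cutoff $R$ from convergence of the series at $x$, then use uniform continuity of $g$ to control the finite partial sum on a small ball, and finally subtract from the tiling identity \eqref{tiling} to bound the tail. The only cosmetic difference is that the paper makes the continuity step explicit by choosing $r$ so that $\Abs{g(x)-g(y)}<\epsilon/(2\Abs{N})$ for the finite index set $N=\Set{t\in\Lambda:\Abs{t}\le R}$, whereas you invoke uniform continuity of the finite sum $F_{R_0}$ directly; these are equivalent.
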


\begin{proof}
For each $x \in \RR^d$ it follows from \eqref{tiling} that there is $R = R(x, \epsilon) > 0$ such that
$$
\sum_{t \in \Lambda, \Abs{t}>R} g(x-t) < \epsilon/2.
$$
Let $N = \Set{t\in \Lambda: \Abs{t} \le R}$. It follows that
\beql{tmp1}
\sum_{t \in N} g(x-t) > L - \epsilon/2.
\eeq
This is a finite sum.
Since $g$ is uniformly continuous we can select $r = r(x, \epsilon)>0$ such that
$$
\Abs{x-y}<r \Rightarrow \Abs{g(x)-g(y)}< \frac{\epsilon}{2\Abs{N}}.
$$
Combining this with \eqref{tmp1} we obtain that whenever $\Abs{x-y}<r$ we have
$$
\sum_{t \in N} g(y-t) > L - \epsilon,
$$
and, because of \eqref{tiling}, we have that \eqref{tail} holds.
\end{proof}

\begin{corollary}\label{cor:compact-tail}
With the assumptions of Lemma \ref{lm:tail}, if $B \subseteq \RR^d$ is a bounded set
then
for each $\epsilon>0$ there is $R>0$ such that
\beql{compact-tail}
\sum_{t \in \Lambda, \Abs{t}>R} g(x-t) < \epsilon,\ \ \ \text{ whenever } x \in B.
\eeq
\end{corollary}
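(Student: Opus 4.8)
The statement to prove is Corollary \ref{cor:compact-tail}, which extends the pointwise tail estimate of Lemma \ref{lm:tail} to a uniform estimate over a bounded set $B$.

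The plan is a routine compactness argument. First I would note that without loss of generality $B$ may be taken to be compact, since the tail sum $\sum_{t\in\Lambda,\Abs{t}>R} g(x-t)$ is monotone in $B$ and the closure $\overline B$ is compact whenever $B$ is bounded; thus an estimate over $\overline B$ immediately gives one over $B$. Next I apply Lemma \ref{lm:tail} at each point $x \in \overline B$ with the given $\epsilon$: this furnishes radii $r(x,\epsilon)>0$ and $R(x,\epsilon)>0$ such that \eqref{tail} holds on the ball $B(x, r(x,\epsilon))$. The collection $\Set{B(x,r(x,\epsilon)) : x \in \overline B}$ is an open cover of the compact set $\overline B$, so it admits a finite subcover, say by balls centered at $x_1,\dots,x_k$.

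Then I would set $R = \max\Set{R(x_1,\epsilon),\dots,R(x_k,\epsilon)}$. Given any $x \in B \subseteq \overline B$, it lies in one of the finitely many balls $B(x_j, r(x_j,\epsilon))$, so by \eqref{tail} we have $\sum_{t\in\Lambda,\Abs{t}>R(x_j,\epsilon)} g(x-t) < \epsilon$; and since $g \ge 0$, enlarging the cutoff from $R(x_j,\epsilon)$ to $R$ only removes nonnegative terms, so $\sum_{t\in\Lambda,\Abs{t}>R} g(x-t) < \epsilon$ as well. This establishes \eqref{compact-tail}.

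There is essentially no obstacle here: the only points requiring a word of care are the reduction to the compact case (using boundedness of $B$) and the monotonicity of the tail sum in the cutoff radius, which relies crucially on the nonnegativity of $g$ assumed in Lemma \ref{lm:tail}. The finite subcover is the standard Heine–Borel step and the rest is bookkeeping.
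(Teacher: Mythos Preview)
Your argument is correct and is essentially identical to the paper's own proof: both pass to the compact closure $\overline{B}$, cover it by the balls $B_{r(x,\epsilon)}(x)$ furnished by Lemma \ref{lm:tail}, extract a finite subcover centered at $x_1,\dots,x_n$, and set $R=\max_j R(x_j,\epsilon)$. Your explicit remark that enlarging the cutoff only discards nonnegative terms (using $g\ge 0$) makes precise the step the paper leaves to ``and the conclusion follows.''
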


\begin{proof}
Let $\epsilon>0$.
The closure $\overline{B}$ of $B$ is compact and is covered by the union of open balls
$$
\bigcup_{x \in \overline{B}} B_{r(x, \epsilon)}(x),
$$
where the radius $r(x, \epsilon)$ is given by Lemma \ref{lm:tail}.
Therefore there exist $x_1, \ldots, x_n \in \overline{B}$ such that
$$
\overline{B} \subseteq \bigcup_{j=1}^n B_{r(x_j, \epsilon)}(x_j).
$$
Let $R = \max\Set{R(x_1, \epsilon), R(x_2, \epsilon), \ldots, R(x_n, \epsilon)}$,
where the numbers $R(x_j, \epsilon)$ are those given by Lemma \ref{lm:tail},
and the conclusion follows.
\end{proof}

\begin{proof}[Proof of Theorem \ref{th:line-discrepancy}]

The definition of the Fourier Transform that we use is
$$
\ft{f}(\xi) = \int_{\RR^d} f(x) e^{-2\pi i \xi\cdot x}\,dx
$$
for $f \in L^1(\RR^d)$ and $\xi \in \RR^d$.

For a straight line $L$ through the origin let us denote by $\pi_L f$ (the projection of $f$ onto $L$)
the function of $t \in \RR$ given by
$$
\pi_Lf (t) = \int_{\RR} f(tu+su^{\perp}) \,ds,
$$
where $u$ is a unit vector along $L$ and $u^\perp$ is a unit vector orthogonal to $u$.
By Fubini's theorem
for any straight line $L$ through the origin $\pi_Lf (t)$ is well defined for almost all (Lebesgue) values
of $t \in \RR$.

It is well known and easy to see that the Fourier Transform of $\pi_Lf$ is equal to the restriction
of the (two-dimensional) Fourier Transform of $f$, $\ft f$, on $L$:
$$
\ft{\pi_Lf}(\xi) = \ft{f}(\xi u),\ \ \xi\in\RR.
$$
Write
$$
M = \esssup_{L,t} \Abs{\pi_Lf(t)}
$$
where the essential supremum is taken over all lines $L$ through the origin and real numbers $t$.

For the Fourier Transform of $f = \chi_Q * \sum_{g \in G} z_g \delta_g$ we have
\beql{f-equals}
\ft{f}(\xi) = \ft{\chi_Q}(\xi) \phi(\xi),
\eeq
where the trigonometric polynomial
$$
\phi(\xi) = \sum_{g \in G} z_g e^{-2\pi i g\xi}
$$
is a $T^*$-periodic function.
Here $T^* = \Set{x\in\RR^2:\ \inner{x}{t} \in \ZZ,\  \forall t \in T}$ is the dual lattice of $T$, which also has area 1.
It is also true that $T^{**} = T$.

If $B$ is any measurable fundamental domain of $T^*$ we have
that $L^2(B)$ has the functions
$$
e_{t}(x) = e^{-2\pi i t\cdot x},\ \ \ t \in T,
$$
as an orthonormal basis. This is merely a restatement, after a linear transformation, of the fact that
the exponentials with integer frequencies form an orthonormal basis of $L^2([0,1]^2)$.
Parseval's identity then gives
\beql{phi-squared-int}
\int_B \Abs{\phi(\xi)}^2 = \sum_{g \in G} \Abs{z_g}^2.
\eeq

We are going to show that
$$
M>C (\diam G)^{-1/2} \left(\sum_{g \in G} \Abs{z_g}^2\right)^{1/2},
$$
for some constant $C>0$ that may depend only on the lattice $T$ and the chosen fundamental domain $Q$.

Write $D = \diam G$. It follows that $\diam{\supp f}\le D+C$, for some constant $C$ that depends
on the fundamental domain $Q$.
Since $Q$ has area 1 it follows that $D$ is bounded below by a constant and, therefore, we have that
$$
\diam{\supp f} \le C D.
$$
As we also have
$$
\diam{\supp{\pi_L f}} \le C D
$$
and $\Abs{\pi_Lf(t)} \le M$ for almost all $t \in \RR$
we obtain from Parseval's equality that
\beql{l2-line}
\int_{\RR} \Abs{\ft{f}(tu)}^2\,dt = \int_{\RR} \Abs{\pi_Lf(t)}^2\,dt \le CM^2 D.
\eeq
It also follows from Parseval's equality that
\beql{l2-plane}
\sum_{g \in G} \Abs{z_g}^2 = \int_{\RR^2} \Abs{f}^2 = \int_{\RR^2}\Abs{\ft{f}}^2.
\eeq

We now make the observation that since $Q$ tiles the plane with the lattice $T$ it follows
that $Q$ has the lattice $T^*$ as a {\em spectrum} (see e.g. \cites{Milano}).
In other words, the family of exponentials
\beql{t-star-exponentials}
e_{t^*}(x) = e^{2\pi i t^*\cdot x},\ \ \ t^* \in T^*,
\eeq
forms an orthonormal basis for $L^2(Q)$.
This implies the tiling condition
\beql{ps-tiling}
\sum_{t^* \in T^*} g(\xi-t^*) = 1,\ \ \text{ for all } \xi \in \RR^2,
\eeq
where
$$
g(\xi) = \Abs{\ft{\chi_Q}(\xi)}^2 \text{ and } \int g = 1.
$$
One can easily prove \eqref{ps-tiling} by observing that the inner product, in $L^2(Q)$, of the function
$e_\xi(t) = e^{2\pi i \xi\cdot x}$ and the function $e_{t^*}(x) = e^{2\pi i t^*\cdot x}$ is equal to
$$
\ft{\chi_Q}(\xi-t^*)
$$
and then rewriting Parseval's identity for the function $e_\xi(t)$ with respect to
the complete orthonormal system \eqref{t-star-exponentials}.
The function $g(x)$ is uniformly continuous in $\RR^2$ (since $\ft{\chi_Q}$ is both uniformly continuous
and bounded), therefore Lemma \ref{lm:tail} and Corollary \ref{cor:compact-tail} are applicable
to the function $g$ and the lattice $\Lambda=T^*$.

Taking $B$ to be a bounded fundamental domain of $T^*$ and $\epsilon=1/2$ we have therefore from Corollary \ref{cor:compact-tail} that 
there exists a finite $R>0$ such that for $\xi \in B$ we have
\beql{tail-bound}
\sum_{t^* \in T^*, \Abs{t^*} > R} g(\xi -t^*) < \frac{1}{2}.
\eeq
Let now $R'>0$ be such that
\beql{containment}
\Set{\Abs{\xi} > R'} \subseteq \bigcup_{t^* \in T^*, \Abs{t^*} > R} (t^*+B).
\eeq
This exists because $B$ is a bounded set.
We may assume that $R'$ depends on $T$ alone, by taking the fundamental domain $B$ of $T^*$ that minimizes $R'$.

We have
\begin{align*}
\int_{\Abs{\xi} > R'} \Abs{\ft{f}(\xi)}^2
 &= \int_{\Abs{\xi} > R'} g(\xi) \Abs{\phi(\xi)}^2 \text{\ \ \  (from \eqref{f-equals})}\\
 &\le  \sum_{t^* \in T^*, \Abs{t^*} > R} \int_{t^*+B} g(\xi) \Abs{\phi(\xi)}^2 \text{\ \ \  (from \eqref{containment})}\\
 &=  \sum_{t^* \in T^*, \Abs{t^*} > R} \int_{B} g(\xi+t^*) \Abs{\phi(\xi)}^2 \text{\ \ \  (since $\phi(\xi)$ is $T^*$-periodic)}\\
 &=  \int_B \left( \sum_{t^* \in T^*, \Abs{t^*} > R} g(\xi+t^*) \right) \Abs{\phi(\xi)}^2 \\
 &\le \frac{1}{2} \int_B \Abs{\phi(\xi)}^2 \text{\ \ \  (from \eqref{tail-bound})}\\
 &= \frac{1}{2} \sum_{g \in G} \Abs{z_g}^2 \text{\ \ \  (from \eqref{phi-squared-int})}.
\end{align*}
Writing $S^1$ for the unit circle in $\RR^2$
it follows from \eqref{l2-plane} and the inequality just proved that
\begin{align*}
\frac{1}{2} \sum_{g \in G} \Abs{z_g}^2
 &\le \int_{\Abs{\xi} \le R'} \Abs{\ft{f}(\xi)}^2\\
 &= \int_{0 \le t \le R'} \int_{u \in S^1} t \Abs{\ft{f}(tu)}^2 \,du \, dt \text{\ \ \  (in polar coordinates)}\\
 &\le R' \int_{u \in S^1} \int_{\RR} \Abs{\ft{f}(tu)}^2 \,dt \,du\\
 &\le R' 2\pi C M^2 D \text{\ \ \ (from \eqref{l2-line})}.
\end{align*}
Solving for $M$ in the inequality above, and remembering that $R'$ depends only on $Q$ and $T$, gives
$$
M \ge C \frac{1}{\sqrt{D}} \left( \sum_{g \in G} \Abs{z_g}^2 \right)^{1/2},
$$
for $C$ depending on $Q$ and $T$ only.
\end{proof}

\begin{bibsection}
\begin{biblist}

\bib{alexander}{incollection}{
    AUTHOR = {Alexander, J. Ralph}, author={Beck, J{\'o}zsef}, author={Chen, William W. L.},
     TITLE = {Geometric discrepancy theory and uniform distribution},
 BOOKTITLE = {Handbook of discrete and computational geometry},
    SERIES = {CRC Press Ser. Discrete Math. Appl.},
     PAGES = {185--207},
 PUBLISHER = {CRC, Boca Raton, FL},
      YEAR = {1997},
}

\bib{beck-chen}{book}{
    AUTHOR = {Beck, J{\'o}zsef}, author={Chen, William W. L.},
     TITLE = {Irregularities of distribution},
    SERIES = {Cambridge Tracts in Mathematics},
    VOLUME = {89},
 PUBLISHER = {Cambridge University Press, Cambridge},
      YEAR = {2008},
     PAGES = {xiv+294},
}
	
\bib{chazelle}{book}{
    AUTHOR = {Chazelle, Bernard},
     TITLE = {The discrepancy method},
      NOTE = {Randomness and complexity},
 PUBLISHER = {Cambridge University Press, Cambridge},
      YEAR = {2000},
     PAGES = {xviii+463},
}

\bib{drmota}{book}{
    AUTHOR = {Drmota, Michael}, author={Tichy, Robert F.},
     TITLE = {Sequences, discrepancies and applications},
    SERIES = {Lecture Notes in Mathematics},
    VOLUME = {1651},
 PUBLISHER = {Springer-Verlag, Berlin},
      YEAR = {1997},
     PAGES = {xiv+503},
}

\bib{IK}{article}{
   author={Iosevich, Alex},
   author={Kolountzakis, Mihail N.},
   title={The discrepancy of a needle on a checkerboard. II},
   journal={Unif. Distrib. Theory},
   volume={5},
   date={2010},
   number={2},
   pages={1--13},
   issn={1336-913X},
   review={\MR{2608014}},
}

\bib{Milano}{article}{
   author={Kolountzakis, Mihail N.},
   title={The study of translational tiling with Fourier Analysis},
   editor={Brandolini, L.},
   booktitle={Fourier Analysis and Convexity},
   pages={131--187},
   publisher={Birkh\"auser},
   year={2004}
}

\bib{K}{article}{
   author={Kolountzakis, Mihail N.},
   title={The discrepancy of a needle on a checkerboard},
   journal={Online J. Anal. Comb.},
   number={3},
   date={2008},
   pages={Art. 7, 5},
   issn={1931-3365},
   review={\MR{2375610 (2009d:11113)}},
}

\bib{KP}{article}{
   author={Kolountzakis, Mihail N.},
   author={Parissis, Ioannis},
   title={Circle discrepancy for checkerboard measures},
   journal={Illinois J. Math.},
   volume={56},
   number={4},
   date={2012},
   pages={1297-1312},
}

\bib{matousek}{book}{
    AUTHOR = {Matou{\v{s}}ek, Ji{\v{r}}{\'{\i}}},
     TITLE = {Geometric discrepancy},
    SERIES = {Algorithms and Combinatorics},
    VOLUME = {18},
      NOTE = {An illustrated guide,
              Revised paperback reprint of the 1999 original},
 PUBLISHER = {Springer-Verlag, Berlin},
      YEAR = {2010},
     PAGES = {xiv+296},
}

\bib{RO1}{article}{
   author={Rogers, Allen D.},
   title={Lower bounds on strip discrepancy for nonatomic colorings},
   journal={Monatsh. Math.},
   volume={130},
   date={2000},
   number={4},
   pages={311--328},
   issn={0026-9255},
   review={\MR{1785425 (2001g:11126)}},
   doi={10.1007/s006050070030},
}

\bib{RO2}{article}{
   author={Rogers, Allen D.},
   title={A functional from geometry with applications to discrepancy
   estimates and the Radon transform},
   journal={Trans. Amer. Math. Soc.},
   volume={341},
   date={1994},
   number={1},
   pages={275--313},
   issn={0002-9947},
   review={\MR{1169082 (94c:11068)}},
   doi={10.2307/2154623},
}

\bib{RO3}{article}{
   author={Rogers, Allen D.},
   title={Irregularities of distribution with respect to strips},
   journal={Acta Math. Hungar.},
   volume={110},
   date={2006},
   number={1-2},
   pages={13--21},
   issn={0236-5294},
   review={\MR{2198411 (2006k:11150)}},
   doi={10.1007/s10474-006-0003-2},
}

\end{biblist}
\end{bibsection}

\end{document}